\numberwithin{equation}{section}
\newtheorem{theorem}{Theorem}[section]
\newtheorem{ctheorem}[theorem]{Conjectural Theorem}
\newtheorem{proposition}[theorem]{Proposition}
\newtheorem{observation}[theorem]{Observation}
\newtheorem{prop}[theorem]{Proposition}
\newtheorem{lemma}[theorem]{Lemma}
\newtheorem{corollary}[theorem]{Corollary}
\theoremstyle{definition}
\newtheorem{definition}[theorem]{Definition}
\theoremstyle{remark}
\newtheorem{remark}[theorem]{Remark}
\theoremstyle{remark}
\newtheorem{example}[theorem]{Example}
\theoremstyle{remark}
\newtheorem{note}[theorem]{Note}
\theoremstyle{remark}
\newtheorem{question}[theorem]{Question}
\theoremstyle{remark}
\newtheorem{conjecture}[theorem]{Conjecture}
\newcommand{\spt}{\mathrm{spt}}
\newcommand{\haus}{\mathcal{H}}
\newcommand{\cH}{\mathcal{H}}
\newcommand{\Lip}{\mathrm{Lip}}
\newcommand{\eps}{\epsilon}
\newcommand{\N}{\mathbb{N}}
\newcommand{\R}{\mathds{R}}
\newcommand{\C}{\mathds{C}}
\newcommand{\dC}{\mathds{C}}
\newcommand{\dR}{\mathds{R}}
\newcommand{\ton}[1]{\left(#1\right)}
\newcommand{\cur}[1]{\left\{#1\right\}}
\newcommand{\B}[2]{B_{#1}\ton{#2}}
\newcommand{\hol}{H\"older }
\title{Rectifiable Reifenberg and Uniform Positivity under Almost Calibrations}
\author{Nick Edelen, Aaron Naber and Daniele Valtorta}\thanks{N.E. was supported by NSF grant DMS-2204301, }
\date{\today}
\keywords{Reifenberg theorem, Hausdorff measure, rectifiability, calibrations}
\address{University of Notre Dame (USA)}
\email{nedelen@nd.edu}
\address{Northwestern University (USA)}
\email{anaber@math.northwestern.edu}
\address{University of Milano-Bicocca (EU)}
\email{daniele.valtorta@unimib.it}
\begin{document}
\begin{abstract}

The Reifenberg theorem \cite{reif_orig} tells us that if a set $S\subseteq B_2\subseteq \dR^n$ is uniformly close on all points and scales to a $k$-dimensional subspace, then $S$ is H\"older homeomorphic to a $k$-dimensional Euclidean ball.  In general this is sharp, for instance such an $S$ may have infinite volume, be fractal in nature, and have no rectifiable structure. \\

The goal of this note is to show that we can improve upon this for an almost calibrated Reifenberg set, or more generally under a positivity condition in the context of an $\epsilon$-calibration $\Omega$ .  An $\epsilon$-calibration is very general, the condition holds locally for all continuous $k$-forms such that $\Omega[L]\leq 1+\epsilon$ for all $k$-planes $L$. We say an oriented $k$-plane $L$ is $\alpha$-positive with respect to $\Omega$ if $\Omega[L]>\alpha>0$.  If $\Omega[L]>\alpha> 1-\epsilon$ then we call $L$ an $\epsilon$-calibrated plane.\\

The main result of this paper is then the following.  Assume at all points and scales $B_r(x)\subseteq B_2$ that $S$ is $\delta$-Hausdorff close to a subspace $L_{x,r}$ which is uniformly positive $\Omega[L_{x,r}]>\alpha $ with respect to an $\epsilon$-calibration.  Then $S$ is $k$-rectifiable with uniform volume bounds.

\end{abstract}
\maketitle
\tableofcontents

\section{Introduction}

For a closed set $S\subset \B 2 0\subseteq  \R^m$ and $B_r(x)\subseteq B_2(0)$, following \cite{toro:reifenberg} let us define $\theta(x,r)$ by 
\begin{align}
	\theta(k;x,r)=\theta(x,r)\equiv r^{-1}\inf_{L^k} d_H\big(S\cap B_r(x),{L}\cap B_r(x)\big)\, ,
\end{align}
where the inf is taken over all affine $k$-planes $L$ and $d_H$ is the Hausdorff distance. The classical Reifenberg theorem states that if $\theta(x,r)<\epsilon(n)$ for all $x\in S$ and $B_r(x)\subseteq B_2(0)$, then  $S\cap \B 1 0 $ is bi-H\"older equivalent to a flat disk $\B 1 0 \subset \R^k$.  We sometimes say in this case that $S$ satisfies the $\epsilon$-Reifenberg condition. This bi-\hol equivalence is proved by constructing smooth approximations of $S_r$ at all scales $r\in [0,1]$, and carefully analyzing their behavior, see Section \ref{ss:Sr_construction}.\\

As is well understood, being $\epsilon$-Reifenberg is not enough to guarantee $k$-dimensional volume bounds on $S$ or its rectifiability. Many papers have investigated how to obtain these extra results by adding additional constraints on the set $S$.  For instance in \cite{toro:reifenberg} it is shown that if one has the $L^2$ Dini estimate $\int \theta(x,r)^2 \frac{dr}{r}<\epsilon$ for all $x\in S$, then $S$ is bilipschitz to a Euclidean ball.  In \cite{NV_harmonic} it is shown that if one replaces the pointwise Dini estimate with an average $\int_S \int \theta(x,r)^2 \frac{dr}{r}<\epsilon$, then $S$ is still $W^{1,p}$ equivalent to a Euclidean ball with $p\to\infty$ as $\epsilon\to 0$ .\\

Other interesting extensions are obtained by looking at Jones' $\beta$-numbers, which are ``one-sided versions'' of the quantity $\theta$ introduced above, for example:
\begin{gather}
	\beta_{\infty}(k;x,r)=\beta_{\infty}(x,r)\equiv r^{-1}\inf_{L^k} {\sup_{y\in \B r x \cap S}} d(y,L) \, .
\end{gather}

Notice in particular that any subset $S$ of a $k$-dimensional plane satisfies $\beta_\infty(x,r)=0$ for all $x,r$, while $\theta(x,r)$ need not be null if there are holes in $S$. These quantities are considered in \cite{davidtoro,NV_harmonic,ENV} to prove $k$-dimensional volume bounds on $S$ and rectifiability. Also, Azzam-Tolsa in \cite{azzam-tolsa} and Tolsa \cite{tolsa:jones-rect} further investigated the summability properties of beta numbers, and showed that a set is $k$-rectifiable if and only if $\int \beta^2(x,r)\frac{dr}{r}<\infty $ for a.e. $x\in S$, and similar statements hold for measures. In \cite{ENV}, \cite{N_RectReif} effective statements of this form were proved and then vastly generalized to classify all measures under analogous $L^2$ Dini estimate assumptions.  These sort of estimates play a role in the analysis of singularities, see for instance \cite{NV_harmonic, NaVaApprox, DeMaSpVa,EdEn, NaVaVarifold, Boyu}. \\

In this note we take a different approach, and hope to drop the Dini estimate in certain contexts.   We show that if a closed set $S\subset \B 2 0 \subseteq \R^m$ satisfies an $\epsilon$-Reifenberg condition with respect to $k$-planes which are uniformly positive with respect to an almost calibration, then $S\cap \B 1 0 $ is $k$-Ahlfors regular and $k$-rectifiable without the additional Dini assumption.  In order to make this precise let us introduce our terminology a little better:

\begin{definition}[$\epsilon$-Calibration]
 Let $\Omega$ be a smooth $k$-form over $B_2(0)\subseteq \R^m$. We say that $\Omega$ is an $\epsilon$-calibration if 
 \begin{enumerate}
  \item $|\Omega-\Omega_0|\leq \epsilon$ for a constant form $\Omega_0$ , 
  \item for all $x\in \R^m$ and any oriented $k$-dimensional subspace $L\subseteq \R^m$, we have $\Omega[L]\leq 1+\epsilon$\, .
 \end{enumerate}
\end{definition}
\begin{remark}
	If $L^k$ is an oriented subspace then we define $\Omega[L]=\Omega[e_1,\ldots,e_k]$ where $e_1,\ldots e_k$ is any oriented orthonormal basis of $L$.
\end{remark}
\begin{remark}
Observe that if $\Omega$ is {\it any } $C^0$ regular $k$-form, then we can normalize it by a constant to satisfy $(2)$, and thus after dilating a small ball $B_s(p)\to B_1(0)$ we have that $\Omega$ becomes an $\epsilon$-calibration.  In particular, the condition on $\Omega$ is highly non-restrictive and holds locally for all $C^0$ $k$-forms.
\end{remark}

Our main result is that if our $\epsilon$-Reifenberg surface is uniformly positive with respect to an almost calibration $\Omega$, then it must be rectifiable with Ahlfor's regularity estimates:

\begin{theorem}[Rectifiable Reifenberg for Almost Calibrations]\label{t:main}
 Let $S\subset B_2(0)\subseteq \R^n$ be a closed set with $0\in S$ and let $\Omega$ be an $\epsilon$-calibration. Then for all $2\epsilon<\alpha<1$ $\exists$ $\delta(n,\epsilon)>0$ and $ A(n,\alpha,\epsilon)>1$ such that if for all $B_{r}(x)\subseteq \B 2 0$ there exists an oriented $k$-dimensional subspace $L=L_{x,r}$ such that
 \begin{align}
 	d_H\Big( S\cap B_r(x), L_{x,r}\cap B_r(x)\Big)<\delta r\, ,\;\;\;\Omega[L_{x,r}]>\alpha>0\, ,
 \end{align}
 then $S\cap \B 1 0$ is $k$-rectifiable and for all $x\in S$ with $B_{2r}(x)\subseteq \B 2 0$ we have that
\begin{gather}
 (1-C(n)\delta)\leq \frac{\cH^k(S\cap \B r x)}{\omega_k r^k}\leq A\, .
\end{gather}
Further, $A\to 1$ as $\alpha\to 1$ and $\epsilon\to 0$ .
\end{theorem}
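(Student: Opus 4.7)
My approach combines the classical smooth Reifenberg construction of Section \ref{ss:Sr_construction} with an almost-calibration cone comparison to obtain the upper volume bound. The first step is to invoke that construction to produce smooth $k$-manifolds $S_r$ approximating $S$ at each scale $r \in (0,1]$, with $S_r \to S$ in Hausdorff distance and tangent planes $T_y S_r$ lying within $O(\delta)$ of the Reifenberg plane $L_{y,r}$. Standard Reifenberg arguments then yield the lower bound $\mathcal{H}^k(S \cap B_r(x)) \geq (1 - C(n)\delta)\omega_k r^k$ directly from the flatness hypothesis alone, so all the new content lies in producing the upper bound.

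For the upper bound, fix $x \in S$ and $\rho$ with $B_{2\rho}(x) \subseteq B_2$, and set $M := S_\rho \cap B_\rho(x)$. Since $T_y M$ is $O(\delta)$-close to a plane $L_{y,\rho}$ with $\Omega[L_{y,\rho}] > \alpha$, the pointwise estimate $\Omega[T_y M] \geq \alpha - C\delta$ holds on $M$, so
\[
(\alpha - C\delta)\,\mathcal{H}^k(M) \;\leq\; \int_M \Omega \;=\; \int_M \Omega_0 \;+\; \int_M (\Omega - \Omega_0),
\]
with $|\int_M(\Omega-\Omega_0)| \leq \epsilon\, \mathcal{H}^k(M)$ by hypothesis (1) on $\Omega$. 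The constant form $\Omega_0$ is closed, so by Stokes the integral $\int_M \Omega_0$ is determined by $\partial M$, and we may replace $M$ as a $k$-current by the cone $C_\rho$ from $x$ over $\partial M \cap \partial B_\rho(x)$. Because $\partial M$ lies within $O(\delta\rho)$ of the $(k-1)$-sphere $L_{x,\rho} \cap \partial B_\rho(x)$, the cone has mass at most $(1+C\delta)\omega_k\rho^k$, and the comass of $\Omega_0$ is at most $1+2\epsilon$, hence
\[
\int_M \Omega_0 \;=\; \int_{C_\rho} \Omega_0 \;\leq\; (1 + O(\epsilon + \delta))\,\omega_k \rho^k.
\]
Rearranging produces $\mathcal{H}^k(M) \leq A(n,\alpha,\epsilon)\,\omega_k\rho^k$ with $A \to 1$ as $\alpha \to 1, \epsilon \to 0$; passing $\rho \to 0$ transfers the bound to $S$.

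With two-sided Ahlfors bounds on $S$ in place together with the pointwise Reifenberg flatness $\beta_\infty(y, r) \leq C\delta$ for every $y \in S$ and $r$, the rectifiability of $S \cap B_1(0)$ follows from standard criteria. For instance, the $S_\rho$ are locally Lipschitz graphs over the Reifenberg planes with uniformly small Lipschitz constants, so a covering and compactness argument transfers the graphical structure to the limit set $S$; alternatively one may invoke a David--Toro or Azzam--Schul type rectifiability theorem directly from the $\beta_\infty$-control plus Ahlfors regularity.

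\textbf{Main obstacle.} The crux is the cone comparison. One must track the boundary $\partial M \cap \partial B_\rho(x)$ precisely enough that the cone mass is $\omega_k\rho^k$ up to vanishing errors, and one must exploit the closedness of $\Omega_0$ (which $\Omega$ itself lacks) in order to avoid picking up a dimensional factor of $k$ from a naive Stokes estimate $\int \Omega_0 \leq \rho \cdot \mathcal{H}^{k-1}(\partial M)$. The splitting $\Omega = \Omega_0 + (\Omega - \Omega_0)$ is exactly what the $\epsilon$-calibration hypothesis provides and is what makes the constant sharp, producing $A \to 1$ as the calibration becomes exact instead of a dimensional blow-up.
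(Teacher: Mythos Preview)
Your overall strategy --- use the constant part $\Omega_0$ of the almost calibration, apply Stokes, and compare to a flat competitor --- is exactly the paper's idea. But the implementation has a genuine gap at the sentence ``passing $\rho \to 0$ transfers the bound to $S$,'' and this is where all the content actually lives.

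The cone comparison you run is carried out for $M = S_\rho \cap B_\rho(x)$, i.e.\ at the \emph{matching} scale $r=\rho$. At that scale the bound $\mathcal{H}^k(S_\rho\cap B_\rho(x))\leq (1+C\delta)\omega_k\rho^k$ is already a triviality of the Reifenberg construction: by Lemma~\ref{l:construction_Sr}(3) we have $\rho|A_{S_\rho}|\leq C\delta$, so $S_\rho$ is a $C\delta$-Lipschitz graph on every $\rho$-ball and the calibration is not used. What you actually need is the \emph{uniform-in-$r$} bound $\mathcal{H}^k(S_r\cap B_\rho(x))\leq A\omega_k\rho^k$ for $r\ll\rho$; without it there is no way to pass to $S$ (Hausdorff convergence does not give lower semicontinuity of $\mathcal{H}^k$, and Federer--Fleming compactness requires exactly the uniform mass bound you are trying to prove). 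Your cone argument does not extend to $r\ll\rho$ because $\partial(S_r\cap B_\rho(x))$ is then only $\delta\rho$-Hausdorff close to a $(k-1)$-sphere, which gives no control on its $\mathcal{H}^{k-1}$-measure; the snowflake shows this can genuinely blow up.

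The paper sidesteps the boundary issue entirely: in Lemma~\ref{l:construction_Sr} the approximations $S_r$ are built to \emph{coincide with the fixed plane $L_{0,\epsilon}$ outside $B_{1+\epsilon}$}, for every $r$. Thus $[S_r]$ and $[L_{0,\epsilon}]$ have the same boundary as currents in $B_{1+\epsilon}$, and since $\Omega_0$ is closed one gets
\[
\int_{S_r\cap B_{1+\epsilon}}\Omega_0=\int_{L_{0,\epsilon}\cap B_{1+\epsilon}}\Omega_0\leq (1+C\epsilon)\omega_k
\]
directly, with no need to estimate any cone or boundary measure. Combined with $\Omega_0[T_yS_r]\geq\alpha-C\epsilon$ this yields $\mathcal{H}^k(S_r\cap B_1)\leq A\omega_k$ uniformly in $r$, which is the crucial input for the Federer--Fleming limit.

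Your rectifiability step is also too loose. The ``standard criteria'' you cite (David--Toro, Azzam--Schul) require a Dini summability of $\beta$-numbers that is precisely what the theorem is trying to avoid assuming, and a Hausdorff limit of Lipschitz graphs need not be a Lipschitz graph. The paper instead takes the current limit $[S_r]\to T$, uses the projection identity~\eqref{eqn:Sr_Br_vol2} to force $\spt T=S$ with multiplicity one, and reads off rectifiability of $S$ from the fact that $T$ is an integral current.
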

\begin{remark}
 We underline again that in this context there is no need for a summability condition to ensure rectifiability and volume bounds for $S$, unlike the general Rectifiable Reifenberg case.  We simply require the Reifenberg condition, albeit restricted to positively oriented subspaces.
\end{remark}
\begin{remark}
Though the theorem is stated in Euclidean space, it is easy to see that it can be applied to a manifold locally by looking at a suitably small chart.  Indeed the statements are highly nonsensitive to perturbation.  
\end{remark}
\begin{remark}
	In particular if $\Omega[L_{x,r}]>1-\epsilon$ , so that $S$ is almost calibrated with respect to $\Omega$, then the volume of all balls $B_r(x)$ on $S$ is close to that of the Euclidean ball.
\end{remark}
\begin{remark}
If we view $S$ equipped with the Hausdorff measure as an integral current, oriented by imposing that $\Omega[T_x S] \geq \alpha$, then $\partial S = 0$ as currents in $B_2$ and $S$ satisfies the following ``weak'' almost-minimizing property: if $S'$ is any other integral current with $S' = S$ outside $U \subset\subset B_2$ and $\partial S'\cap U = \emptyset$, then $||S||(U) \leq C(\eps) ||S'||(U)$ .
\end{remark}

The very rough outline is based on the simple observation that one can use the family of Reifenberg approximations $S_r$ as a homotopy to an $\alpha$-positive $k$-plane $L=S_0$.  Up to some care on boundary terms, one can use this to uniformly control the integral of $\Omega$ on $S$, and hence its volume.  We can then take a limit and make a similar conclusion on $S$ itself. \\

\subsection{Examples and Applications}

Most of the immediate applications arise by considering either calibrations or their associated nonintegrable counterparts.  The four examples we will consider are almost complex manifolds, potentially nonintegrable $G2$ and $\text{Spin}(7)$ manifolds, and almost special Lagrangians:

\subsubsection{Almost Complex Manifolds}\label{sss:applications:almost_complex}

Let $(M^{2n},g,J)$ be a manifold with $C^0$ almost complex structure $J$ and compatible metric $g$, i.e. $g(JX,JY) = g(X,Y)$ and $J^2 = -Id$.  We can write $\omega[X,Y] = g(J X,Y)$ to be the associated continuous $2$-form.\\  

In a neighborhood of each point $x\in M$ we can therefore take coordinates $\varphi:B_1(0^{2n})\to M$ with $\varphi(0)=x$ and whose image is $\epsilon$-small, i.e. after normalizing 
\begin{align}\label{e:application:almost_complex}
	|\varphi^*g_{ij} - \delta_{ij}|\, ,\;\;|\varphi^*J - J_0|\, ,\;\; |\varphi^*\omega -\omega_0| < \epsilon\, ,
\end{align}
where the $L^\infty$ norm  is being measured on $B_1(0^{2n})\subseteq \dC^n$ with $J_0$  the standard complex structure on $\dC^n$ and $\omega_0$  the standard symplectic form on $\dC^n$.  In particular, it follows for each $k$ that $\omega^k$ is an $\epsilon$-calibration.  If we apply Theorem \ref{t:main} to this context we arrive at

\begin{corollary}[$C^0$ Almost Complex Manifolds]
	Let $(B_2(0^{2n}), J)$ be an almost complex structure as in \eqref{e:application:almost_complex}, so with $\omega= g(J\cdot,\cdot)$ we have that $\omega^k$ is an $\epsilon$-calibration.  Let $S=S^{2k}\subseteq B_2(0^{2n})$ be $\delta(x,\alpha)$-Reifenberg with respect to $\alpha$-positive subspaces.  That is, assume for each $x\in S$ with $B_{r}(x)\subseteq B_2$ there exists a subspace $L^{2k}_{x,r}\subseteq \dR^{2n}$ with 
\begin{align}
	d_H\big( S\cap B_r(x), {L_{x,r}}\cap B_r(x)\big)<\delta\, ,\;\;\; \omega^k[L]>\alpha>0\, .
\end{align}
Then $S$ is $2k$-rectifiable, and for each $x\in S$ with $B_{2r}(x)\subseteq B_2$ we have the Ahlfors regularity
\begin{align}
	(1-C(n)\delta)\leq \frac{\cH^k(S\cap \B r x)}{\omega_k r^k}\leq A(n,\alpha)\, .
\end{align}
\end{corollary}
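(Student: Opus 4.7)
The plan is to verify that $\omega^k/k!$ is an $\epsilon'$-calibration (with $\epsilon' = C(n,k)\epsilon$) in the sense of the definition above, after which the conclusion follows directly from Theorem \ref{t:main}. Set $\Omega := \omega^k/k!$ and $\Omega_0 := \omega_0^k/k!$, the latter being a constant form on $\dC^n\cong \dR^{2n}$. The telescoping identity
\begin{align*}
    \omega^k - \omega_0^k \;=\; \sum_{j=0}^{k-1} \omega^j \wedge (\omega-\omega_0) \wedge \omega_0^{k-1-j},
\end{align*}
combined with the pointwise bound $|\omega|\leq |\omega_0|+\epsilon \leq C(n)$ and the hypothesis $|\omega-\omega_0|<\epsilon$, will yield $|\Omega - \Omega_0|\leq C(n,k)\epsilon$, verifying condition (1) of an $\epsilon$-calibration.

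For condition (2), I would invoke the classical Wirtinger inequality: $\Omega_0[L]\leq 1$ for every oriented real $2k$-plane $L\subseteq \dC^n$, with equality exactly on complex $k$-subspaces with their induced orientation. Together with the previous estimate this gives $\Omega[L]\leq \Omega_0[L] + |\Omega-\Omega_0|\leq 1+ C(n,k)\epsilon$ for every oriented $2k$-plane $L$, so $\Omega$ is indeed a $C(n,k)\epsilon$-calibration.

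Finally, the positivity hypothesis $\omega^k[L_{x,r}]>\alpha>0$ translates directly into $\Omega[L_{x,r}]>\alpha/k!$. After shrinking $\epsilon$ (if necessary) so that $\alpha/k! > 2 C(n,k)\epsilon$, Theorem \ref{t:main} applies with calibration $\Omega$ and positivity threshold $\alpha/k!$, yielding both $2k$-rectifiability of $S$ and the stated Ahlfors bound, with constants in the ambient dimension $2n$. The only content of the argument beyond Theorem \ref{t:main} is Wirtinger's inequality together with the elementary perturbation estimate above; no serious obstacle is expected, as the work is simply a matter of checking that the abstract framework of Theorem \ref{t:main} is met in this setting.
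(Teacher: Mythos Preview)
Your proposal is correct and follows exactly the route the paper intends: the corollary is stated as an immediate application of Theorem~\ref{t:main}, with the paper simply asserting that ``it follows for each $k$ that $\omega^k$ is an $\epsilon$-calibration'' without spelling out the verification. You have correctly supplied the missing details---the $1/k!$ normalization, the Wirtinger inequality for condition~(2), and the telescoping estimate for condition~(1)---so your argument is in fact more complete than what appears in the paper.
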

\begin{remark}
	If $\alpha\geq 1-\epsilon$ then we might call $S$ almost complex. In particular, if $L_{x,r}$ are almost-complex subspaces, the theorem holds. Notice also that $A\to 1$ as $\epsilon,\delta\to 0$.
\end{remark}

\begin{remark}
If $J = J_0$ is the standard complex structure on $\C^n$ and both $\delta \to 0$ and $\alpha \to 1$ as $r \to 0$ (i.e. $S$ is ``vanishing complex Reifenberg-flat'' ), then $S$ is in fact a complex subvariety of $\C^n$.  This follows by thinking of $S$ as a complex current without boundary and applying the classification result of \cite{King}.
\end{remark}

\vspace{.3cm}

\subsubsection{NonIntegrable $G2$ Structures}

Recall that one defines the associative three form $\psi_0$ and coassociative four form $\Omega_0$ on $\dR^7$ by
\begin{align}
	&\psi_0 = e^{123}- e^{167}- e^{527}- e^{563} -e^{415}- e^{426}- e^{437}\, ,\notag\\
	&\Omega_0 = e^{4567}-e^{4523}-e^{4163}-e^{4127}-e^{2637}-e^{1537}-e^{1526}\, . 
\end{align}
The definitions are motivated by looking at the imaginary part of the octonian algebra.\\

Let $(M^7,\Omega)$ be a $7$-manifold with $C^0$ regular $3$-form $\psi$.  We call $\psi$ a $G2$-structure if at each $x\in M$ there is a basis $e_1,\ldots,e_7\in T_xM$ such that
\begin{align}
	\psi(x)=\psi_0\, .
\end{align}
Recall \cite{Bryant_Holonomy} that such a basis uniquely defines a metric $g$ on $M$, and hence by Hodge starring we get a global continuous $4$-form $\Omega$ such that with respect to the same basis we have $\Omega(x)=\Omega_0$ .\\

In this context we can then find in a neighborhood of each $x\in M$ coordinates $\varphi:B_2(0^{7})\subseteq \dR^7\to M$ with $\varphi(0)=x$ and
\begin{align}\label{e:application:almost_G2}
	\big|\varphi^*\Omega - \Omega_0\big| <\epsilon\, . \\\notag
\end{align}

Our main result is to study Reifenberg sets which are positive with respect to the coassociative form:

\begin{corollary}[$C^0$ Almost G2 Structures]
	Let $(B_2(0^{7}), \Omega)$ be a G2 structure as in \eqref{e:application:almost_G2}.  Let $S^{4}\subseteq B_2(0^{7})$ be $\delta(x,\alpha)$-Reifenberg with respect to $\alpha$-positive subspaces.  That is, assume for each $x\in S$ with $B_{r}(x)\subseteq B_2$ there exists a subspace $L^{4}_{x,r}\subseteq \dR^{7}$ with 
\begin{align}
	d_H\big( S\cap B_r(x), L_{x, r} \cap B_r(x)\big)<\delta\, ,\;\;\; \Omega[L]>\alpha>0\, .
\end{align}
Then $S$ is $4$-rectifiable, and for each $x\in S$ with $B_{2r}(x)\subseteq B_2$ we have the Ahlfors regularity
\begin{align}
	(1-C(n)\delta)\leq \frac{\cH^k(S\cap \B r x)}{\omega_k r^k}\leq A(n,\alpha)\, .
\end{align}
\end{corollary}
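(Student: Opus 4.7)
The plan is to apply Theorem~\ref{t:main} directly, with $n=7$, $k=4$, and the $\epsilon$-calibration taken to be $\Omega$ itself. All that needs checking is that $\Omega$ satisfies the two bullet points in the definition of an $\epsilon$-calibration on $B_2(0^7)$, at which point the hypotheses of the corollary are word-for-word the hypotheses of the main theorem.

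The first condition, $|\Omega - \Omega_0|\leq \epsilon$ for the constant coassociative form $\Omega_0$, is exactly the assumption \eqref{e:application:almost_G2}. The second condition, $\Omega[L]\leq 1+\epsilon$ for every oriented 4-plane $L\subseteq \dR^7$, reduces to the classical comass bound
\begin{align}
\Omega_0[L]\leq 1,
\end{align}
with equality precisely on coassociative 4-planes. This is the Harvey--Lawson fact that $\Omega_0$ is a calibration on $\dR^7$; it can be verified either by Hodge duality from the unit comass of the associative 3-form $\psi_0$ (which in turn follows from $|u\times v|\leq |u|\,|v|$ for the octonionic cross product) or by a direct diagonalization of $\Omega_0$ on an arbitrary 4-plane using the transitive $G_2$-action on oriented 4-planes up to comass. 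Using the comass norm, one then has $\bigl|\Omega[L]-\Omega_0[L]\bigr|\leq |\Omega-\Omega_0|\leq \epsilon$, so $\Omega[L]\leq 1+\epsilon$ as required.

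Once $\Omega$ is recognized as an $\epsilon$-calibration, the Reifenberg closeness and the $\alpha$-positivity $\Omega[L_{x,r}]>\alpha$ in the hypothesis of the corollary are precisely the hypotheses of Theorem~\ref{t:main}. Invoking that theorem returns the $4$-rectifiability of $S\cap B_1(0)$ together with the two-sided Ahlfors bound with the stated constants, completing the proof. The only genuine mathematical content is the comass inequality for $\Omega_0$, which is classical; the rest is definition-matching, so I do not anticipate any real obstacle in writing out the argument.
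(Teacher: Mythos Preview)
Your proposal is correct and matches the paper's approach: the corollary is stated as an immediate application of Theorem~\ref{t:main}, and the only thing to verify is that the coassociative form $\Omega$ is an $\epsilon$-calibration, which follows from \eqref{e:application:almost_G2} together with the classical Harvey--Lawson fact that $\Omega_0$ has unit comass. The paper does not spell out a proof, so your write-up is in fact more detailed than what appears there.
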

\begin{remark}
	If $\alpha\geq 1-\epsilon$ then we might call $S$ almost coassociative, and in this case $A\to 1$ as $\epsilon,\delta\to 0$.
\end{remark}

\vspace{.3cm}

\subsubsection{Almost $Spin(7)$ Manifolds}

Recall that one defines the canonical four form $\Omega_0$ on $\dR^8$ by
\begin{align}
	\Omega_0 &= e^{1256}+e^{1278}+e^{3456}+e^{3478}+e^{1357}-e^{1368}-e^{2457}\, ,\notag\\
	&+e^{2468}-e^{1458}-e^{1467}-e^{2358}-e^{2367}+e^{1234}+e^{5678}\, .
\end{align}
The isomorphism group of this four form is exactly $Spin(7)$ .  We call a manifold $(M^8,\Omega)$ with a $C^0$ four form $\Omega$ an almost $Spin(7)$ manifold if for each $x\in M$ there exists a basis $e_1,\ldots,e_8\in T_xM$ such that
\begin{align}
	\Omega(x) = \Omega_0\, .
\end{align}

In this context we can then find in a neighborhood of each $x\in M$ coordinates $\varphi:B_2(0^{8})\subseteq \dR^8\to M$ with $\varphi(0)=x$ and
\begin{align}\label{e:application:almost_spin7}
	\big|\varphi^*\Omega - \Omega_0\big| <\epsilon\, . \\\notag
\end{align}

Our main result is to study Reifenberg sets which are positive with respect to the $Spin(7)$ form:

\begin{corollary}[$C^0$ Almost $Spin(7)$ Structures]
	Let $(B_2(0^{8}), \Omega)$ be a G2 structure as in \eqref{e:application:almost_spin7}.  Let $S^{4}\subseteq B_2(0^{8})$ be $\delta(x,\alpha)$-Reifenberg with respect to $\alpha$-positive subspaces.  That is, assume for each $x\in S$ with $B_{r}(x)\subseteq B_2$ there exists a subspace $L^{4}_{x,r}\subseteq \dR^{8}$ with 
\begin{align}
	d_H\big( S\cap B_r(x), L_{x, r}\cap B_r(x)\big)<\delta\, ,\;\;\; \Omega[L]>\alpha>0\, .
\end{align}
Then $S$ is $4$-rectifiable, and for each $x\in S$ with $B_{2r}(x)\subseteq B_2$ we have the Ahlfors regularity
\begin{align}
	(1-C(n)\delta)\leq \frac{\cH^k(S\cap \B r x)}{\omega_k r^k}\leq A(n,\alpha)\, .
\end{align}
\end{corollary}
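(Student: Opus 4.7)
The plan is to reduce the corollary directly to Theorem \ref{t:main} by verifying that the given four-form $\Omega$ on $B_2(0^8)$ satisfies the two axioms of an $\epsilon$-calibration, after which the conclusion is an immediate specialization. Condition $(1)$ of the definition, closeness to a constant form, is literally the hypothesis \eqref{e:application:almost_spin7}. The only real content is verifying condition $(2)$: one must show $\Omega[L]\leq 1+C\epsilon$ for every oriented $4$-plane $L\subseteq \R^8$.

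The key classical input is the Harvey--Lawson theorem that the constant Cayley form $\Omega_0$ on $\R^8$ is itself a calibration, i.e. $\Omega_0[L]\leq 1$ for every oriented $4$-plane $L$, with equality precisely on the Cayley $4$-planes. Combining this with \eqref{e:application:almost_spin7}, for any oriented $4$-plane $L$ with oriented orthonormal basis $e_1,\ldots,e_4$ we have
\begin{align}
	\Omega[L] \;=\; \Omega_0[L] + (\Omega-\Omega_0)[e_1,\ldots,e_4] \;\leq\; 1 + C\epsilon,
\end{align}
where $C$ depends only on the conversion between the pointwise $L^\infty$ norm used in \eqref{e:application:almost_spin7} and the comass norm on $\Lambda^4(\R^8)^*$. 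After relabeling $C\epsilon$ as $\epsilon$ this identifies $\Omega$ as an $\epsilon$-calibration in the sense of the definition in the introduction.

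With this identification, the hypothesis of the corollary on the Reifenberg $4$-planes $L_{x,r}$ together with $\Omega[L_{x,r}]>\alpha>0$ is verbatim the hypothesis of Theorem \ref{t:main} applied with $k=4$, $n=8$, the $\epsilon$-calibration $\Omega$, and the given $\alpha$. Choosing $\delta = \delta(8,\epsilon)$ from that theorem, we obtain $4$-rectifiability of $S\cap B_1(0)$ together with the two-sided Ahlfors bound $1-C(8)\delta \leq \cH^4(S\cap B_r(x))/(\omega_4 r^4)\leq A(8,\alpha,\epsilon)$ at every scale $B_{2r}(x)\subseteq B_2(0)$, with $A\to 1$ as $\alpha\to 1$ and $\epsilon\to 0$, which is exactly the conclusion claimed. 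No further obstacle appears; the only substantive external ingredient is the comass-one property of the Cayley form $\Omega_0$, and everything else is a direct citation of Theorem \ref{t:main}. The same scheme works identically for the almost complex and almost $G_2$ corollaries above, using in their place the calibration properties of $\omega_0^k/k!$ on $\C^n$ (Wirtinger's inequality) and of the coassociative form on $\R^7$.
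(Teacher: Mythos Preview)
Your proposal is correct and matches the paper's approach: the corollary is stated without proof because it is an immediate specialization of Theorem~\ref{t:main}, and the only thing to check is that $\Omega$ is an $\epsilon$-calibration, which follows exactly as you say from \eqref{e:application:almost_spin7} together with the comass-one property of the Cayley form $\Omega_0$.
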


\vspace{.3cm}

\subsubsection{Almost Special Lagrangians}

Let $(M^{2n},\Omega)$ be a $2n$-dimensional manifold with $C^0$ regular $n$-form $\Omega$.  Assume at each $x\in M$ we can find a basis $\frac{\partial}{\partial x^1},\frac{\partial}{\partial y^1},\ldots,\frac{\partial}{\partial x^n},\frac{\partial}{\partial y^n}\in T_xM$ such that
\begin{align}
	\Omega(x) = Re\Big(dz^1\wedge\cdots\wedge dz^n\Big) \, ,
\end{align}
where $dz^j = dx^j+idy^j$ with respect to the dual basis on $T^*_xM$.
Then we call $\Omega$ an almost Special Lagrangian structure.  Note the above is quite weak, and even on a K\"ahler manifold the result would be much weaker than a calibration for Special Lagrangians.  Indeed it does not require a reduction to $SU(n)$ of the structure group, as the required basis may rotate the complex factor in front of $Re\Big(dz^1\wedge\cdots\wedge dz^n\Big)$ as $x$ moves.\\

In this context we can then find in a neighborhood of each $x\in M$ coordinates $\varphi:B_2(0^{2n})\subseteq \dC^n\to M$ with $\varphi(0)=x$ and
\begin{align}\label{e:application:almost_special_lagrangian}
	\big|\varphi^*\Omega - Re\Big(dz^1\wedge\cdots\wedge dz^n\Big)\big| <\epsilon\, . \\\notag
\end{align}

We can state the main result, which is that sets $S^n$ which are Reifenberg flat and uniformly positive with respect to an almost calibration for Special Lagrangians are rectifiable:\\

\begin{corollary}[$C^0$ Almost Special Lagrangians]
	Let $(B_2(0^{2n}), \Omega)$ be an almost calibration for Special Lagrangians as in \eqref{e:application:almost_special_lagrangian}.  Let $S^{n}\subseteq B_2(0^{2n})$ be $\delta(x,\alpha)$-Reifenberg with respect to $\alpha$-positive subspaces.  That is, assume for each $x\in S$ with $B_{r}(x)\subseteq B_2$ there exists a subspace $L^{n}_{x,r}\subseteq \dR^{2n}$ with 
\begin{align}
	d_H\big( S\cap B_r(x), L_{x, r}\cap B_r(x)\big)<\delta\, ,\;\;\; \Omega[L]>\alpha>0\, .
\end{align}
Then $S$ is $n$-rectifiable, and for each $x\in S$ with $B_{2r}(x)\subseteq B_2$ we have the Ahlfors regularity
\begin{align}
	(1-C(n)\delta)\leq \frac{\cH^k(S\cap \B r x)}{\omega_k r^k}\leq A(n,\alpha)\, .
\end{align}
\end{corollary}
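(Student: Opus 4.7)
The plan is to reduce directly to Theorem \ref{t:main}. Working in the coordinate chart $\varphi:B_2(0^{2n})\to M$ supplied by the hypothesis, I would verify that the pull-back form $\tilde\Omega := \varphi^*\Omega$, viewed as a continuous $n$-form on $B_2(0^{2n})\subseteq \dC^n$, is an $\epsilon$-calibration in the sense of the paper, with the Special Lagrangian form
\begin{equation*}
\Omega_0 := \operatorname{Re}\bigl(dz^1\wedge\cdots\wedge dz^n\bigr)
\end{equation*}
playing the role of the reference constant form.

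Condition (1) of the $\epsilon$-calibration definition is immediate from \eqref{e:application:almost_special_lagrangian}, which is precisely $|\tilde\Omega - \Omega_0|<\epsilon$ with $\Omega_0$ constant on $\dC^n$. For condition (2), the comass bound $\tilde\Omega[L]\leq 1+\epsilon$ on every oriented real $n$-plane $L\subseteq \dR^{2n}$, I would invoke the classical Harvey--Lawson inequality that $\Omega_0$ is itself a calibration on $\dC^n$: $\Omega_0[L]\leq 1$ for every oriented $n$-plane, with equality precisely on the Special Lagrangian planes. Combining these two facts gives
\begin{equation*}
\tilde\Omega[L] \;\leq\; \Omega_0[L] + |\tilde\Omega - \Omega_0| \;\leq\; 1+\epsilon,
\end{equation*}
as required.

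Once $\tilde\Omega$ is identified as an $\epsilon$-calibration, the Reifenberg hypothesis on $S\subseteq B_2(0^{2n})$ --- already phrased in terms of oriented $n$-planes $L_{x,r}$ with $\Omega[L_{x,r}]>\alpha$ --- matches verbatim the hypotheses of Theorem \ref{t:main} with $k=n$ and ambient dimension $m=2n$. The conclusions, namely $n$-rectifiability of $S\cap B_1(0)$ and the two-sided Ahlfors bounds with constants depending only on $(n,\alpha)$, therefore transfer directly to the present setting.

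I expect no substantive obstacle: all of the analytic work lives inside Theorem \ref{t:main}, and the entire content of this corollary is the geometric observation that $\operatorname{Re}(dz^1\wedge\cdots\wedge dz^n)$ is a calibration, which is classical. The only minor technicality worth flagging is that the definition of $\epsilon$-calibration in the paper asks for a smooth form, while $\Omega$ here is only $C^0$; however only pointwise values of $\Omega$ on tangent planes enter either the hypotheses or the conclusions, so a standard mollification produces smooth $(\epsilon+o(1))$-calibrations that preserve the positivity $\Omega[L_{x,r}]>\alpha-o(1)$, and one passes to the limit.
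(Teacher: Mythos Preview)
Your proposal is correct and matches the paper's approach: the corollary is stated without proof in the paper precisely because it is an immediate application of Theorem~\ref{t:main} once one observes that $\operatorname{Re}(dz^1\wedge\cdots\wedge dz^n)$ is a calibration, which is exactly the verification you supply. Your remark on mollifying to smooth the $C^0$ form is a reasonable added detail that the paper leaves implicit.
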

\begin{remark}
	If $\alpha\geq 1-\epsilon$ then we might call $S$ an almost Special Lagrangian, and in this case $A\to 1$ as $\epsilon,\delta\to 0$.
\end{remark}

\section{Proof of Theorem \ref{t:main}}

\subsection{The Family of Approximations \texorpdfstring{$S_r$}{Sr}}\label{ss:Sr_construction}

Let $S\subseteq B_2\subseteq \dR^n$ be an $\delta$-Reifenberg set, namely for each $x\in S$ and $B_r(x)\subseteq B_2$ there exists an a $k$-plane $L_{x,r}\subseteq \dR^n$ such that $d_H(S\cap B_r(x),L_{x,r}\cap B_r(x))<\delta r$ .  The classical Reifenberg tells us that near $B_{1}$ we have that $S$ is homeomorphic to a disk.  A key ingredient in this proof is the construction of smooth approximations $S_r$ of $S$, and their careful analysis.  The below is a mild extension of Reifenberg's original construction, which itself follows immediately from his construction:

\begin{lemma}[Construction of Smooth Approximations $S_r$]\label{l:construction_Sr}
	Let $S\subseteq B_{1+\epsilon}(0)\subseteq \dR^n$ be a $\delta$-Reifenberg set as above with $\epsilon>0$ fixed and $\delta<\delta(n,\epsilon)$.  For each $r\in (0,\epsilon]$ there exists smooth oriented manifolds $S_r\subseteq \dR^n$ which depend smoothly on $r$ such that
\begin{enumerate}
	\item $S_\epsilon = L_{0,\epsilon}$ is the fixed plane which well approximates $S$ on $B_{1+\epsilon}(0)$ .
	\item On $\dR^n\setminus B_{1+\epsilon}$ we have that $S_r= S_\epsilon = L_{0,\epsilon}$ is a fixed plane. 
	\item $d_H\big(S\cap B_{1},S_r\cap B_{1}\big)<C(n, \epsilon)\delta\, r$ with second fundamental form bound $r|A_{S_r}|\leq C(n,\epsilon)\delta$.
	\item For each $x\in S_r\cap B_{1}$ and $y\in S\cap B_{r}(x)$ we have $d_{Gr}\big(L_{y,r_y}, T_x S_r\big)<C(n,\epsilon)\delta$, where $r_y=r_{|y|}$ is piecewise linear with $r_{|y|}=r$ for $|y|\leq 1$ and $r_{|y|}=\epsilon$ for $|y|\geq 1+\epsilon$ .  In particular we can orient $S_r$ so that $\Omega_0[T_x S_r] > \eps/2$ for all $x \in S_r \cap B_1$.
	\item $\big|\frac{d}{dr}S_r\big| < C(n,\epsilon)\delta$ .
	\item If $S\cap B_{2r}(x)$ is smooth with second fundamental form $r|A_S|\leq \epsilon$, then $S\cap B_{r}(x)=S_r\cap B_r(x)$ . 
\end{enumerate}
\end{lemma}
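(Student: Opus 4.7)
The plan is to adapt Reifenberg's classical iterative construction of smooth approximating manifolds, producing a family $S_r$ that depends on a continuous scale parameter $r \in (0,\eps]$, starting from the initial condition $S_\eps = L_{0,\eps}$ and sharpening as $r \to 0$. At each scale $r$, the surface $S_r$ is locally built from the approximating planes $L_{x,r}$ via a partition-of-unity averaging. The key underlying fact is that any two planes that each $\delta$-approximate $S$ on overlapping $r$-balls must themselves be $C(n)\delta$-close in Grassmannian distance; this simultaneously makes the averaging well-defined, gives the tangent plane control in~(4), and supplies the stability between adjacent scales needed for~(5).

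\textbf{Construction at fixed $r$.} For each $r$, pick a maximal $r$-separated net $\{x_\alpha\} \subseteq S \cap \B{1+2\eps}{0}$ and a partition of unity $\{\phi_\alpha^r\}$ subordinate to $\{\B{2r}{x_\alpha}\}$ with $|\nabla^j \phi_\alpha^r| \leq C(n,j)r^{-j}$. Letting $\pi_L$ denote orthogonal projection onto the affine plane $L$, and using the piecewise-linear scale cutoff $r_{|y|}$ of item~(4), define the smoothing map
\begin{equation*}
	\sigma_r(y) \;=\; \sum_\alpha \phi_\alpha^{r_{|y|}}(y)\, \pi_{L_{x_\alpha,\, r_{|y|}}}(y),
\end{equation*}
and extend by $\sigma_r(y) = \pi_{L_{0,\eps}}(y)$ on $\dR^n \setminus \B{1+\eps}{0}$. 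Then define $S_r$ by iterating $\sigma_{r_j}$ at dyadic scales $r_j = \eps 2^{-j}$, starting from $S_\eps = L_{0,\eps}$, and interpolating by a smooth homotopy between $S_{r_j}$ and $S_{r_{j+1}}$ for $r \in [r_{j+1},r_j]$. The Reifenberg flatness guarantees each $\sigma_{r_j}$ is a $C\delta$-small perturbation of the identity on a neighborhood of the previous approximation, so the family $S_r$ consists of smoothly embedded manifolds.

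\textbf{Verification of properties.} Items (1) and (2) hold by definition; item~(6) follows because on regions where $S$ is smooth with second fundamental form $r|A_S| \leq \eps$, the approximating planes $L_{x_\alpha,r}$ coincide with tangent planes of $S$ up to error $O(\eps)$, so $\sigma_r$ inductively preserves $S$ there. For~(4), the tangent plane $T_x S_r$ is, up to $O(\delta)$, a convex combination (via the partition of unity) of the projections $\pi_{L_{x_\alpha,r}}$ with $x_\alpha \in \B{2r}{x}$, each of which is $C(n)\delta$-close in Grassmannian distance to $L_{y,r}$ for any $y \in S \cap \B{r}{x}$. For the $\Omega_0$-positivity clause of~(4), the hypothesis $\alpha > 2\eps$ together with $|\Omega - \Omega_0| \leq \eps$ forces $\Omega_0[L_{y,r}] > \alpha - \eps > \eps$, and the Grassmannian closeness of $T_x S_r$ to $L_{y,r}$ then gives $\Omega_0[T_x S_r] > \eps - C\delta > \eps/2$ for $\delta$ small. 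Items~(3) and~(5) follow from direct $C^2$ bounds on $\sigma_r$ combined with the stability estimate $d_{Gr}(L_{x,r}, L_{x,r'}) \leq C(n)\delta$ whenever $r/2 \leq r' \leq 2r$.

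\textbf{Main obstacle.} The principal technical difficulty is arranging that the iteration actually produces a family $S_r$ that is smooth (or at least $C^1$) in the continuous parameter $r$, as opposed to merely continuous at dyadic scales. A standard remedy is to take partitions of unity $\phi_\alpha^r$ that depend $C^\infty$-jointly in $(y,r)$, for instance by mollifying in $r$; the stability of the $L_{x,r}$ under $r \mapsto r'$ then carries over to smooth dependence of $\sigma_r$ on $r$, and the flow it generates yields a smooth family $S_r$. A secondary issue is that the cutoff $r_{|y|}$ is only piecewise linear at $|y| = 1$ and $|y| = 1+\eps$, so $S_r$ will be at best $C^{1,1}$ across those spheres, but this is more than enough for all stated bounds.
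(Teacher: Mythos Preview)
Your proposal is correct and follows essentially the same approach as the paper: a Vitali-type covering of $S$ at scale $r_{|y|}$, choice of best approximating planes on each ball, gluing via a partition of unity, and then building $S_r$ at dyadic scales $r_a=2^{-a}$ with a graphical/homotopy interpolation to obtain the continuous family and the bound in~(5). The paper's own argument is only a rough sketch with references to \cite{toro:reifenberg,simon_reif,N_RectReif}, and your write-up is in fact more explicit about the partition-of-unity map $\sigma_r$ and the mechanism behind~(4); the only point to tighten is~(6), where ``$\sigma_r$ preserves $S$'' needs the observation that on a region where $S$ is already a $C^{1}$ graph with $r|A_S|\le\epsilon$, the best approximating planes are exactly tangent planes and the iterated projection can be arranged to fix $S$ pointwise rather than merely up to $O(\epsilon)$.
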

\begin{remark}
	$d_{Gr}$ in $(4)$ is the Grassmann distance between the subspace $L_{y,r_y}$ and the tangent space $T_x S_r$ .
\end{remark}
\begin{remark}
	$\frac{d}{dr}S_r$ is understood as the normal velocity vector to $S_r$, so that $(5)$ is the statement that the this normal velocity vector has small norm.
\end{remark}
\begin{remark}
	Condition $(4)$ is just saying that $T_xS_r$ is always well approximated by a subspace $L$, which itself well approximates $S$ on the appropriate scale.  On $B_1$ this scale is $r$ and outside of $B_{1+\epsilon}$ this scale is $\epsilon$, with a linear interpolation between.
\end{remark}

\begin{proof}[Rough Proof]
	As the details are in many references, see for instance \cite{toro:reifenberg,simon_reif,N_RectReif},  and in particular \cite[Theorem 4.2]{N_RectReif} for the construction of a one parameter family of approximating submanifolds $S_r$, let us just mention a few words on the proof.  The idea is simple if somewhat tedious.  One picks a Vitali covering $\{B_{r_i}(y_i)\}$ of $S\cap B_{1+\epsilon}$ with $r_i=r_{|y_i|}$ as in $(4)$ .  Vitali here means that $\{B_{r_i/5}(y_i)\}$ are disjoint.  Then choose for each such ball the best approximating plane $L_i=L_{y_i,r_i}$ , and then glue them together using estimates provided by the Reifenberg assumption.  Condition $(5)$ is not usually stated in this form, instead one often points out that $S_{2r}$ is graphical over $S_r$ with likewise bounds.  However, the graphical property immediately implies we can construct as in $(5)$ .  Indeed, one can build $S_a=S_{r_a}$ for $r_a=2^{-a}$ and use the graphical property to interpolate and build $S_r$ for $r\in (2^{-a},2^{-a+1})$ .
\end{proof}

Let us state in words the outcome of Lemma \ref{l:construction_Sr}.  Beginning with our Reifenberg set $S$ we are building a family of approximations $S_r$, which by $(1)$ begin at a fixed plane $S_\epsilon = L_{0,\epsilon}$ and end at $S_0 = S$, at least on $B_{1}$ .  This family $S_r$ is independent of $r$ outside of $B_{1+\epsilon}$, and hence equal to the same plane $L_{0,\epsilon}$.  By $(3)$ each $S_r$ is $\delta r$-close to $S$ on $B_{1}$ .  The family $S_r$ is moving in a continuous fashion by $(5)$, and the tangent space $T_x S_r$ is always close to some approximating plane of $S$ by $(4)$ .  Condition $(6)$ tells us that if $S$ is itself scale invariantly smooth on some scale, then the approximation should equation $S$ itself.

\subsection{Basic volume bounds on \texorpdfstring{$S$}{S} and \texorpdfstring{$S_r$}{Sj}}

Let us begin with the following, which is under the context of Theorem \ref{t:main} and Lemma \ref{l:construction_Sr}:

\begin{lemma}\label{l:Sr_B1_vol}
	Let $S\subseteq B_{1+\epsilon}$ be as in Theorem \ref{t:main}.  Then for $A=A(n,\alpha,\epsilon)$, $\epsilon<\epsilon(n,\alpha)$ and $\delta<\delta(n,\alpha,\epsilon)$ we have the volume estimate
\begin{align}
	 \cH^k(S_r\cap B_1)&\leq \cH^k(S_r\cap B_{1+\epsilon})\leq A\omega_k\, ,\notag\\
	 \cH^k(S_r\cap B_{1})&\geq (1-C(n)\delta)\omega_k
\end{align}
independent of $r>0$ with $A(n,\alpha,\epsilon)\to 1$ as $\alpha\to 1$ and $\epsilon\to 0$ . Moreover, as currents we have
\[
(\pi_0)_\sharp [S_r] = [\pi_0(S_r)], \quad \text{ and } \quad \pi_0(S_r) \supset B_{1-C(n)\delta} \cap L_{0, \eps},
\]
where $\pi_0 : \R^n \to L_{0, \eps}$ is the orthogonal projection map.

\end{lemma}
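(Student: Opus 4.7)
The plan rests on two geometric observations. First, the constant form $\Omega_0$ is closed ($d\Omega_0=0$), so Stokes' theorem applied to the smooth homotopy $\{S_s\}_{s\in [r,\epsilon]}$ from $S_r$ to $L_{0,\epsilon}$ will control $\int_{S_r}\Omega_0$ in terms of the flat reference integral $\int_{L_{0,\epsilon}}\Omega_0$. Second, the orthogonal projection $\pi_0:\R^n\to L_{0,\epsilon}$ has rank exactly $k$, so $\pi_0^*$ annihilates every $(k{+}1)$-form; this forces the pushforward of the $(k{+}1)$-dimensional homotopy current to vanish, immediately yielding the projection identity.

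First I orient $S_r$ via Lemma~\ref{l:construction_Sr}(4): combining the Grassmann closeness $d_{Gr}(L_{y,r_y},T_xS_r)<C(n,\epsilon)\delta$ with the hypothesis $\Omega[L_{y,r_y}]>\alpha$ and $|\Omega-\Omega_0|\leq\epsilon$ gives
\[
\Omega_0[T_xS_r]\;\geq\;\alpha-\epsilon-C(n,\epsilon)\delta\;>\;0
\]
throughout $S_r$, using $\alpha>2\epsilon$ (the transition annulus is covered by the interpolated $r_y$, and the exterior by $S_r=L_{0,\epsilon}$ together with $\Omega[L_{0,\epsilon}]>\alpha$). Because $\{S_s\}$ is constant outside $B_{1+\epsilon}$ by (2) and has small normal velocity by (5), it parametrizes a $(k{+}1)$-current $H$ compactly supported in $\overline{B_{1+\epsilon}}$ with $\partial H=[S_r]-[L_{0,\epsilon}]$ (the exterior parts cancel in the boundary).

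Stokes with $d\Omega_0=0$ then yields
\[
\int_{S_r\cap \overline{B_{1+\epsilon}}}\Omega_0\;=\;\int_{L_{0,\epsilon}\cap \overline{B_{1+\epsilon}}}\Omega_0\;\leq\;(1+2\epsilon)(1+\epsilon)^k\omega_k,
\]
so dividing by the pointwise lower bound on $\Omega_0[T_xS_r]$ gives $\cH^k(S_r\cap B_{1+\epsilon})\leq A(n,\alpha,\epsilon)\omega_k$ with $A\to 1$ as $\alpha\to 1$ and $\epsilon,\delta\to 0$. For the projection identity, the rank argument gives $(\pi_0)_\sharp H=0$, so applying $(\pi_0)_\sharp$ to $\partial H=[S_r]-[L_{0,\epsilon}]$ produces $(\pi_0)_\sharp[S_r]=[L_{0,\epsilon}]$. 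Positivity $\Omega_0[T_xS_r]>0$ makes $d\pi_0$ an orientation-preserving isomorphism on each tangent plane, so the pushforward multiplicity equals the unsigned preimage count; equality with $[L_{0,\epsilon}]$ forces this count to be $1$ a.e., which is exactly $(\pi_0)_\sharp[S_r]=[\pi_0(S_r)]$.

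For the set inclusion and lower volume bound, integrating (3) and (5) of Lemma~\ref{l:construction_Sr} gives $d_H(S_r\cap\overline{B_{1+\epsilon}},L_{0,\epsilon}\cap\overline{B_{1+\epsilon}})\leq C(n,\epsilon)\delta$. For $q\in B_{1-C\delta}\cap L_{0,\epsilon}$, the almost-everywhere multiplicity-one pushforward (equivalently, a degree argument for $\pi_0|_{S_r\cap\overline{B_{1+\epsilon}}}$ viewed as a map of $k$-disks equal to the identity on the boundary sphere) produces a preimage $x\in S_r$; the Hausdorff bound forces $|x-q|\leq C\delta$, so $x\in S_r\cap B_1$. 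The $1$-Lipschitz property of $\pi_0$ then yields
\[
\cH^k(S_r\cap B_1)\;\geq\;\cH^k(\pi_0(S_r\cap B_1))\;\geq\;\cH^k(B_{1-C\delta}\cap L_{0,\epsilon})\;\geq\;(1-C(n)\delta)\omega_k.
\]
The main technical care required is bookkeeping in the transition annulus $B_{1+\epsilon}\setminus B_1$ where the scale $r_y$ interpolates between $r$ and $\epsilon$: one must verify that $\Omega_0[T_xS_r]>0$ and the Hausdorff closeness to $L_{0,\epsilon}$ persist there, so that the degree-one preimages of $q\in B_{1-C\delta}\cap L_{0,\epsilon}$ actually land inside $B_1$ rather than in the annulus. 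Both follow because $\Omega[L_{y,r_y}]>\alpha$ at every scale and location appearing in the construction.
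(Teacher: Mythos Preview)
Your argument follows the same two-part scheme as the paper: Stokes for the closed constant form $\Omega_0$ across the homotopy $\{S_s\}_{s\in[r,\epsilon]}$ gives the upper volume bound, and a degree-one projection to $L_{0,\epsilon}$ gives the lower bound and the pushforward identity. Your observation that $(\pi_0)_\sharp H=0$ simply because $L_{0,\epsilon}$ supports no nonzero $(k{+}1)$-forms is a clean variant of the paper's appeal to the constancy theorem, and it yields $(\pi_0)_\sharp[S_r]=[L_{0,\epsilon}]$ directly.

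There is one genuinely false step, though it turns out to be unnecessary. You write that ``positivity $\Omega_0[T_xS_r]>0$ makes $d\pi_0$ an orientation-preserving isomorphism on each tangent plane.'' This is not true: $\Omega_0$ is an arbitrary constant $k$-form of comass $\leq 1+2\epsilon$ and has no special relation to $\pi_0$. Take $k=2$, $\Omega_0=dx^1\wedge dx^2+dx^3\wedge dx^4$, and $L_{0,\epsilon}=\mathrm{span}(e_1,e_2)$; then $V=\mathrm{span}(e_3,e_4)$ satisfies $\Omega_0[V]=1$ while $d\pi_0|_V=0$. Nothing in the hypotheses prevents $T_xS_r$ from being (close to) such a $V$ for $x$ well inside $B_1$, since Lemma~\ref{l:construction_Sr}(4) only pins $T_xS_r$ near some $\alpha$-positive plane $L_{y,r}$, not near $L_{0,\epsilon}$.

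You do not need that claim. From $(\pi_0)_\sharp[S_r]=[L_{0,\epsilon}]$ you already have $L_{0,\epsilon}=\mathrm{spt}\big((\pi_0)_\sharp[S_r]\big)\subset\pi_0(S_r)\subset L_{0,\epsilon}$, hence $\pi_0(S_r)=L_{0,\epsilon}$ and the identity $(\pi_0)_\sharp[S_r]=[\pi_0(S_r)]$ is immediate. The surjectivity you use for the lower bound is contained in this as well, so once the erroneous sentence is deleted your proof is complete and matches the paper's.
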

\begin{proof}
	Recall that our almost calibration $\Omega$ satisfies the $L^\infty$ control
\begin{align}
	|\Omega[L]|\leq 1+\epsilon\, ,|\Omega-\Omega_0|<\epsilon\, ,
\end{align}
where $\Omega_0$ is a constant $k$-form.  It follows from Lemma \ref{l:construction_Sr}.4 and our uniform positivity that for each $x\in S_r$ we have
\begin{align}\label{e:Volume_Sr:1}
	\Omega_0[T_x S_r]\geq \Omega[T_x S_r]-\epsilon\geq \Omega[L_{y,r_y}]-C(n,\epsilon)\delta-\epsilon\geq \alpha-3\epsilon/2\, ,
\end{align}
where by Lemma \ref{l:construction_Sr}.3 $y\in S \cap B_r(x)$ is any choice of point.

On the other hand, $d\Omega_0=0$ is a closed $k$-form trivially as it is constant, and so using Lemma \ref{l:construction_Sr}.1 and Lemma \ref{l:construction_Sr}.5 we have that
\begin{align}
	\int_{S_r\cap B_{1+\epsilon}} \Omega_0 &= \int_{L_{0,\epsilon}\cap B_{1+\epsilon}} \Omega_0  \leq 1+C(n)\epsilon\, .
\end{align}
If we then combine this with \eqref{e:Volume_Sr:1} we can turn this into the upper bound
\begin{align}
	\cH^k(S_r\cap B_1) &= \int_{S_r\cap B_1} d\cH^k_S \leq \int_{S_r\cap B_{1+\epsilon}} d\cH^k_S \, ,\notag\\
	&\leq (\alpha-3\epsilon/2)^{-1}\omega_k\int_{S_r\cap B_{1+\epsilon}} \Omega_0 \leq \frac{1+C(n)\epsilon}{\alpha-3\epsilon/2}\omega_k\, .
\end{align}

In order to prove the lower bound, we proceed in a manner similar to the general Reifenberg case.  Namely consider the projection map $\pi_0:\dR^n\to L_{0,\epsilon}$, which is clearly a Lipschitz map with $|d\pi_0[v]|\leq |v|$ .  Note that
\begin{align}
	\pi_0[S_r\cap \partial B_{1+\epsilon}(0^n)] = L_{0,\epsilon}\cap \partial B_{1+\epsilon}(0^k)\, ,
\end{align} 
independent of $r>0$ , and that $\pi_0$ on $S_\epsilon\cap B_{1+\epsilon}(0^n)$ is effectively just the identity map.  In particular, for all $r>0$ we have that $\pi_0$ on $S_r\cap B_{1+\epsilon}(0^n)$ has topological degree one.  Hence $\pi_0$ must map $S_r\cap B_{1+\epsilon}(0^n)$ onto $B_{1+\epsilon}(0^k)$.  On the other hand, it follows from the Hausdorff condition that we then must have
\begin{align}
	\pi_0[S_r\cap B_{1}(0^n)]\supseteq B_{1-C(n)\delta}(0^k)\, .
\end{align}
As $\pi_0$ is a submetry, we then get the lower volume estimate
\begin{align}
	\cH^k(S_r\cap B_{1})\geq \cH^k(\pi_0[S_r\cap B_{1}])\geq \cH^k(B_{1-C(n)\delta}(0^k))\geq (1-C(n)\delta)\omega_k\, ,
\end{align}
as claimed.  The assertion $(\pi_0)_\sharp [S_r] = [\pi_0(S_r)]$ follows from the constancy theorem and the fact that $\pi_0$ has degree one on $S_r \cap B_{1+\eps}$. 
\end{proof}

Let us now use the above to conclude Ahlfor's regularity of our approximating submanifolds:\\

\begin{lemma}\label{l:Sr_Br_vol}
	Let $S\subseteq B_2$ be as in Theorem \ref{t:main}, and let $S_r$ be a smooth approximation with $S_r\to S$ on $B_1$ as in Lemma \ref{l:construction_Sr}.  Let $A=A(n,\alpha,\epsilon)$, $\epsilon<\epsilon(n,\alpha)$ and $\delta<\delta(n,\alpha,\epsilon)$, then for all $x\in S_r$ with $B_{2s}(x)\subseteq B_{2}$ we have the volume estimates
\begin{align}
	 (1-C(n)\delta)\omega_k\,s^k \leq \cH^k(S_r\cap B_s(x))&\leq A\omega_k\,s^k\, .
\end{align}
Here $A(n,\alpha,\epsilon)\to 1$ as $\alpha\to 1$ and $\epsilon\to 0$ .  Moreover, when $r < s$ then given any $y \in S \cap B_r(x)$ we have the identity
\begin{equation}\label{eqn:Sr_Br_vol2}
(\pi_{y, s})_\sharp [S_r \cap B_s(x)] = [\pi_{y, s}(S_r \cap B_s(x)] \quad \text{ and } \quad \pi_{y, s}(S_r \cap B_s(x)) \supset L_{y, s} \cap B_{(1-C(n)\delta)s}(x).
\end{equation}
where $\pi_{y, s}$ is the projection onto $L_{y, s}$.

\end{lemma}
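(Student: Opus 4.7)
The plan is to localize the proof of Lemma \ref{l:Sr_B1_vol}: I replace the ball $B_1(0)$ by $B_s(x)$ and the anchor plane $L_{0,\eps}$ by the Reifenberg plane $L_{y, s}$ at a chosen point $y \in S \cap B_r(x)$. For the relevant case $r < s$ stated in the lemma, such a $y$ exists by the Hausdorff closeness $d_H(S_r, S) \leq C\delta r$ from Lemma \ref{l:construction_Sr}.3 (the easier case $r \geq s$ uses Lemma \ref{l:construction_Sr}.4 to make $S_r \cap B_s(x)$ a nearly-flat smooth graph, yielding the bounds immediately). The key uniform estimate is that every $z \in S_r \cap B_s(x)$ satisfies $\dist(z, L_{y, s}) \leq C\delta s$, which follows by combining $\dist(y, L_{y, s}) \leq \delta s$ with $\dist(z, S) \leq C\delta r \leq C\delta s$ and the Reifenberg hypothesis for $L_{y, s}$.

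For the upper bound, the positivity argument of Lemma \ref{l:Sr_B1_vol} still gives $\Omega_0[T_z S_r] \geq \alpha - 3\eps/2$, so
\[
\cH^k(S_r \cap B_s(x)) \leq (\alpha - 3\eps/2)^{-1} \int_{S_r \cap B_s(x)} \Omega_0.
\]
To bound the right side I compare $S_r \cap B_s(x)$ to the flat disk $L_{y, s} \cap B_s(x)$ via Stokes applied to the closed form $\Omega_0$: because the two $k$-surfaces have $C\delta s$-close boundaries on $\partial B_s(x)$, one can interpolate through a $(k+1)$-dimensional film of normal thickness $\leq C\delta s$, making the difference of $\Omega_0$-integrals $O(\delta s^k)$. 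Since $\int_{L_{y, s} \cap B_s(x)} \Omega_0 = \Omega_0[L_{y, s}] \cdot \omega_k s^k \leq (1 + \eps)\omega_k s^k$, dividing by $\alpha - 3\eps/2$ yields the upper bound with $A \to 1$ as $\eps, \delta \to 0$ and $\alpha \to 1$.

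The lower bound and pushforward identity both reduce to showing that $\pi_{y, s}|_{S_r \cap B_s(x)}$ has topological degree one at every interior point of $L_{y, s} \cap B_{(1-C\delta)s}(x)$. Granted this, the fact that $\pi_{y, s}$ is a $1$-Lipschitz submetry gives
\[
\cH^k(S_r \cap B_s(x)) \geq \cH^k(\pi_{y, s}(S_r \cap B_s(x))) \geq \cH^k(L_{y, s} \cap B_{(1-C\delta)s}(x)) \geq (1 - C\delta)\omega_k s^k,
\]
and the identity $(\pi_{y, s})_\sharp [S_r \cap B_s(x)] = [\pi_{y, s}(S_r \cap B_s(x))]$ follows from the constancy theorem applied to the pushforward current on $L_{y, s}$.

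The main obstacle will be verifying degree one for $\pi_{y, s}|_{S_r \cap B_s(x)}$: unlike in Lemma \ref{l:Sr_B1_vol}, $S_r$ does not agree with $L_{y, s}$ outside a slightly larger ball, so the comparison must be made homotopically. My plan is to use the smooth one-parameter family $\{S_\rho\}_{\rho \in [r, s]}$ from Lemma \ref{l:construction_Sr}: the velocity bound $|dS_\rho/d\rho| \leq C\delta$ of Lemma \ref{l:construction_Sr}.5 gives total displacement $\leq C\delta s$, so the boundary trace $S_\rho \cap \partial B_s(x)$ stays in a thin tubular neighborhood of $L_{y, s} \cap \partial B_s(x)$ throughout, keeping the degree well-defined and invariant. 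At $\rho = s$, $S_s$ is scale-$s$ smooth and graphical over $L_{y, s}$ by Lemma \ref{l:construction_Sr}.4, so $\pi_{y, s}|_{S_s \cap B_s(x)}$ acts as a near-diffeomorphism onto an approximate disk, where degree one is manifest.
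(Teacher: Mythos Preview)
Your lower-bound/degree argument via the homotopy $\{S_\rho\}_{\rho\in[r,s]}$ is sound and is a legitimate alternative to what the paper does.  The paper instead rescales $B_{2s}(x)\to B_2(0)$, observes that the rescaled $\tilde S_r$ is itself a $\delta$-Reifenberg set satisfying the positivity hypothesis, re-runs the construction of Lemma~\ref{l:construction_Sr} on $\tilde S_r$, and uses property~(6) (the approximation of a set that is already scale-$r$ smooth equals the set itself at scale $r$) to identify the new approximation with $\tilde S_r$ on $B_1$.  Then Lemma~\ref{l:Sr_B1_vol} applies verbatim.  This gives both bounds and the pushforward identity in one stroke, with no new homotopy argument.

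Your upper bound, however, has a real gap.  You write that the two boundaries $S_r\cap\partial B_s(x)$ and $L_{y,s}\cap\partial B_s(x)$ are $C\delta s$-close and therefore can be joined by a film of ``thickness $\le C\delta s$'' making the difference of $\Omega_0$-integrals $O(\delta s^k)$.  But the $k$-area of such a film is controlled by $(C\delta s)\cdot \cH^{k-1}(S_r\cap\partial B_s(x))$, and you have no a~priori bound on $\cH^{k-1}(S_r\cap\partial B_s(x))$: at scale $s\gg r$ the surface $S_r$ has curvature only controlled at scale $r$, so its trace on $\partial B_s(x)$ could in principle have large $(k-1)$-measure.  Using the nice boundary $L_{y,s}\cap\partial B_s(x)$ instead would require $S_r\cap\partial B_s(x)$ to be a graph over it, which is exactly what fails when $r\ll s$.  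Nor can the degree-one property rescue you: it controls $\int_{S_r\cap B_s(x)}\pi_{y,s}^*\omega_{L_{y,s}}$, but the projection Jacobian $\pi_{y,s}^*\omega_{L_{y,s}}[T_zS_r]$ can be near zero since $T_zS_r$ is only close to $L_{w,r}$, not to $L_{y,s}$.  The whole point of the calibration is that $\Omega_0[T_zS_r]\ge\alpha-3\eps/2$ regardless of scale, but exploiting this forces you back to bounding $\int_{S_r\cap B_s(x)}\Omega_0$ and hence to the boundary term.

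The paper's rescaling trick avoids this circularity entirely: after rescaling, the new approximation $\tilde S_{r,r'}$ of $\tilde S_r$ again coincides with a fixed plane outside $B_{1+\eps}$ (this is built into Lemma~\ref{l:construction_Sr}), so the Stokes computation in Lemma~\ref{l:Sr_B1_vol} has no boundary error at all.  Property~(6) then transfers the conclusion back to $S_r$.
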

\begin{proof}
	Take $x\in S_r$ with $B_{2s}(x)\subseteq B_{2}$.  Note that if $s\leq r$ then the result follows due to the smoothness conditions on $S_r$, indeed one can take $A\approx 1+C(n)\delta$ using the second fundamental form bounds by Lemma \ref{l:construction_Sr}.3 . So we can assume $s>r$ without loss.  If we rescale and translate our ball $B_{2s}(x)\to B_{2}(0)$ with $\tilde S_r\subseteq B_2(0)$ the rescaled set, observe that $\tilde S_r$ is now itself a Reifenberg set which satisfies the context of Lemma \ref{l:construction_Sr} and Lemma \ref{l:Sr_B1_vol}.  Further the approximations $\tilde S_{r,r'}$ of $\tilde S_{r}$ from Lemma \ref{l:construction_Sr} satisfy $\tilde S_{r,r}=\tilde S_r$ on $B_1$, as in Lemma \ref{l:construction_Sr}.6.  Hence we can apply Lemma \ref{l:Sr_B1_vol} to $\tilde S_{r,r}$ to get the claimed estimates
	\begin{align}
		\cH^k(S_r\cap B_s(x))&=s^k \cH^k(\tilde S_r\cap B_1(0))\leq A\omega_k\,s^k\, ,\notag\\
		\cH^k(S_r\cap B_{s}(x))&=s^k \cH^k(\tilde S_r\cap B_1(0))\geq (1-C(n)\delta)\omega_k\,s^k\, .
	\end{align}
The last statement follows in a similar fashion from Lemmas \ref{l:construction_Sr} and \ref{l:Sr_B1_vol}.
\end{proof}

\subsection{Rectifiability and Volume Control of \texorpdfstring{$S$}{S}}

Consider smooth approximations $S_r$ of $S$ as in Lemma \ref{l:construction_Sr} such that
\begin{align}
	S_r = L_{0,\epsilon} \text{ outside of }B_{1+\epsilon}\, ,\notag\\
	d_H(S_r\cap B_{1},S\cap B_{1})<C(n)\delta r\, .
\end{align}

The $S_r$ are oriented $k$-dimensional surfaces in $B_{1+\eps}$ with $\partial S_r = 0$ and uniform bounds $\haus^k(S_r \cap B_{1+\eps}) \leq A \omega_k$.  Given any $r_i \to 0$, we can therefore apply the Feder-Fleming compactness theorem for currents \cite{simon:gmt} to pass to a subsequence and find an integral $k$-current $T$ in $B_{1+\eps}$ so that $[S_{r_i}] \to T$.  The limiting current $T$ has zero boundary and is positively oriented w.r.t. $\Omega_0$ (since the $S_{r_i}$ are).

We claim that $S \cap B_1$ is $k$-rectifiable, and $T = [S]$ where we orient the tangent spaces of $S$ positively w.r.t. $\Omega_0$, i.e. $\Omega_0[T_x S] \geq 0$ for $\haus^k$-a.e. $x$.  (Our claim will also imply $[S_r] \to [S]$ in $B_1$ as $r \to 0$.)

Trivially $\spt (T) \cap B_1 \subset S$, and so in $B_1$ we can write the mass measure $||T|| = \haus^k \llcorner \theta \llcorner S'$ for some $S' \subset S$ and some $\N$-valued function $\theta$.  On the other hand, for any $x \in S \cap B_1$ and $B_{2s}(x) \subset B_{2}$ we can take the limit of \eqref{eqn:Sr_Br_vol2} to get
\begin{equation}\label{eqn:rect-1}
(\pi_{x, s})_\sharp (T \llcorner B_s(x)) = [U_{x, s}] \quad \text{ for some } L_{x, s} \cap B_{(1-C(n)\delta) s}(x) \subset U_{x, s} \subset L_{x, s}.
\end{equation}
Since $\Lip(\pi_{x, s}) \leq 1$, we get
\begin{equation}\label{eqn:rect-2}
||T||(B_s(x)) \geq (1-C(n)\delta) s^k \omega_k \quad \forall s < 1-|x|/2,
\end{equation}
and therefore we deduce $S' = S$.  

So $S$ is $k$-rectifiable, and for $\haus^k$-a.e. $x \in S \cap B_1 \equiv \spt T \cap B_1$, $T$ has the approximate tangent plane-with-multiplicity: 
\[
(\eta_{x,r})_\sharp T \to \theta(x) [T_x S] \quad \text{ as $r \to 0$},
\]
Here $\eta_{x, r}(y) = (y-x)/r$, and $T_x S$ is oriented so that $\Omega_0[T_x S] \geq 0$.  In fact since $\Omega_0[T_y S_r] \geq \eps/2$ for all $y \in S_r \cap B_1$ we have $\Omega_0[T_x S] \geq \eps/2$ also.  The identity \eqref{eqn:rect-1} passes to the limit, so we get
\[
\theta(x) (\pi_{x, s})_\sharp [T_x S] = [L_{x, s}] \quad \forall s < 1-|x|/2,
\]
which implies $\theta(x) = 1$.  This proves our claim. 

From $T = [S]$, \eqref{eqn:rect-2} and lower-semi-continuity of mass we get the volume estimates
\[
(1-C(n)\delta)\omega_k \leq \haus^k(S \cap B_1) \leq A \omega_k.
\]

We can of course now apply the same argument to any ball $B_{2r}(x)\subseteq S$ in order to obtain that $S$ is everywhere rectifiable with the estimates	
\begin{align}
		(1-C(n)\delta)\omega_k\,r^k\leq \cH^k(S\cap B_r(x))&\leq A\omega_k\, r^k\, ,
\end{align}
finishing the proof of Theorem \ref{t:main}.


\bibliographystyle{aomalpha}
\bibliography{ENV_Reifenberg}

\end{document}